\newtheorem{teo}{Theorem}
\newtheorem{lema}{Lemma}
\def\Q{\mathbb{Q}}
\def\Z{\mathbb{Z}}
\begin{document}

\title{Powers of two as sums of two $k-$Fibonacci numbers}

\author{Jhon J. Bravo}
\address{Departamento de Matem\'aticas\\ Universidad del Cauca\\ Calle 5 No 4--70\\Popay\'an, Colombia.}
\email{jbravo@unicauca.edu.co}

\author{Carlos A. G\'omez}
\address{Departamento de Matem\'aticas\\ Universidad del Valle\\ Calle 13 No 100--00\\Cali, Colombia.}
\email{carlos.a.gomez@correounivalle.edu.co}

\author{Florian Luca}
\address{School of Mathematics, University of the
Witwatersrand, P. O. Box Wits 2050, South Africa and Mathematical Institute, UNAM Juriquilla,  Santiago de Quer\'etaro, 76230  Quer\'etaro de Arteaga, Mexico}
\email{fluca@matmor.unam.mx}


\date{\today}

\begin{abstract}
For an integer $k\geq 2$, let $(F_{n}^{(k)})_{n}$ be the
$k-$Fibonacci sequence which starts with $0,\ldots,0,1$ ($k$
terms) and each term afterwards is the sum of the $k$ preceding
terms.  In this paper, we search for powers of 2 which are sums of
two $k-$Fibonacci numbers. The main tools used in this work are
lower bounds for linear forms in logarithms and a version of the
Baker--Davenport reduction method in diophantine approximation.
This paper continues and extends the previous work of \cite{BL2}
and \cite{BL13}.

\medskip

\noindent\textbf{Keywords and phrases.}\, Generalized Fibonacci numbers, linear forms in logarithms, reduction method.

\noindent\textbf{2010 Mathematics Subject Classification.}\, 11B39, 11J86.

\end{abstract}

\maketitle


\section{Introduction}

\noindent In this paper we consider, for an integer $k\geq 2$, the
\emph{$k-$generalized Fibonacci sequence} or, for simplicity, the
\emph{$k-$Fibonacci sequence} $F^{(k)}:=(F_n^{(k)})_{n\geq 2-k}$
given by the recurrence
\begin{equation}\label{recurrencia}
F_{n}^{(k)}=F_{n-1}^{(k)}+F_{n-2}^{(k)}+\cdots+F_{n-k}^{(k)} \quad \text{for all} \quad n\ge 2,
\end{equation}
with the initial conditions $F^{(k)}_{-(k-2)}=F^{(k)}_{-(k-3)}=\cdots=F^{(k)}_0=0$ and $F^{(k)}_1=1$.

We shall refer to $F_n^{(k)}$ as the \emph{$n$th $k-$Fibonacci number}.
We note that this generalization is in fact a family of sequences
where each new choice of $k$ produces a distinct sequence. For
example, the usual Fibonacci sequence $(F_n)_{n\geq 0}$ is
obtained for $k=2$ and for subsequent values of $k$, these sequences are called Tribonacci, Tetranacci, Pentanacci,
Hexanacci, Heptanacci, Octanacci, and so on.

The first direct observation is that the first $k+1$ non--zero terms in $F^{(k)}$ are powers of two, namely
\begin{equation} \label{primeros-k}
F_{1}^{(k)}=1,~~F_{2}^{(k)}=1, ~~F_{3}^{(k)}=2,~~
F_{4}^{(k)}=4,~\ldots,~F_{k+1}^{(k)}=2^{k-1},
\end{equation}
while the next term in the above sequence is $F_{k+2}^{(k)}=2^{k}-1$. Indeed,
observe that recursion \eqref{recurrencia} implies the three--term recursion
\begin{equation}
\label{three-recursion}
F_n^{(k)}=2F_{n-1}^{(k)}-F_{n-k-1}^{(k)} \quad \text{for all} \quad n\geq 3,
\end{equation}
which also shows that the $k-$Fibonacci sequence grows at a rate less than $2^{n-2}$. We have, in fact, that $F_n^{(k)}<2^{n-2}$ for all $n\geq k+2$
(see \cite[Lemma 2]{BL2}). In addition, Mathematical induction and \eqref{three-recursion} can be used to prove that
\begin{equation} \label{segundos-k}
F_n^{(k)}=2^{n-2}-(n-k)\cdot 2^{n-k-3} \quad \text{holds~ for~ all
}\quad k+2 \leq n \leq 2k+2.
\end{equation}
The above sequences are among the several generalizations of the
Fibonacci numbers, however other generalizations are also known
(see, for example, \cite{Brent,Kilic,Muskat}). The $k-$Fibonacci sequence has been amply studied,
generating an extensive literature.

Recent works on problems involving $k-$Fibonacci numbers are for
instance the papers of F. Luca \cite{FL} and D. Marques \cite{DM},
who proved that 55 and 44 are the largest repdigits (numbers with only one distinct digit) in the
sequences $F^{(2)}$ and $F^{(3)}$, respectively. Moreover, D.
Marques conjectured that there are no repdigits, with at least two
digits, belonging to $F^{(k)}$, for $k > 3$. This conjecture was
confirmed shortly afterwards by Bravo and Luca \cite{BL1}.

Other class of problems has been to represent certain numbers as sum of
$k-$Fibonacci numbers. Regarding this matter, all factorials which are sums of at most three
Fibonacci numbers were found by Luca and Siksek \cite{LS10}; Bravo and Luca \cite{BL10} recently found all
repdigits which are sums of at most two $k-$Fibonacci numbers. Further, the problem of determining all
Fibonacci numbers which are sums of two repdigits is investigated in \cite{SL11}.

In the present paper we extend the works \cite{BL2,BL13} which investigated the powers of 2 that appear in the $k-$Fibonacci sequence
and the powers of 2 which are sums of two Fibonacci numbers, respectively. To be more precise, we study the Diophantine equation
\begin{equation}\label{eqn1}
F_n^{(k)}+F_m^{(k)}=2^a
\end{equation}
in integers $n,m,k$ and $a$ with $k\geq 2$ and $n\geq m$.

Before presenting our main theorem, we observe that in equation
\eqref{eqn1} one can assume $m\geq1$ and $k\geq 3$ since  the other cases were already treated in \cite{BL2,BL13}. Our result is the
following.

\begin{teo} \label{teo1}
Let $(n,m,k,a)$ be a solution of the Diophantine equation \eqref{eqn1} in positive integers $n,m,k$ and $a$ with $k\geq 3$ and $n \geq m$.

\begin{enumerate}
\item[$(a)$] The only solutions of the Diophantine equation \eqref{eqn1} with $n=m$ are given by $(n,m,a)=(1,1,1)$ and $(n,m,a)=(t,t,t-1)$ for all $2\leq t \leq k+1$.

\item[$(b)$] The only solution of the Diophantine equation \eqref{eqn1} with $n>m$ and $a\neq n-2$ is given by $(n,m,a)=(2,1,1)$.

\item[$(c)$] The only solutions of the Diophantine equation \eqref{eqn1} with $n>m$ and $a=n-2$ are given by
   \begin{equation} \label{sol(a=n-2)}
   (n,m,a)=(k+2^{\ell},2^{\ell}+\ell-1,k+2^{\ell}-2),
   \end{equation}
where $\ell$ is a positive integer such that $2^{\ell}+\ell-2\leq k$. So in particular we have $m\le k+1$ and $n\le 2k+1$.
\end{enumerate}
\end{teo}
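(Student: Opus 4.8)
The plan is to prove this by a case analysis following the three parts of the theorem, using the standard machinery of linear forms in logarithms combined with Baker–Davenport reduction. Let me sketch the approach.

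For part (a), when $n = m$, the equation becomes $2F_n^{(k)} = 2^a$, so $F_n^{(k)} = 2^{a-1}$ is itself a power of two. Here I can invoke directly the main result of \cite{BL2} classifying which $k$-Fibonacci numbers are powers of two — the listed solutions correspond exactly to the first $k+1$ terms displayed in \eqref{primeros-k}, namely $F_t^{(k)} = 2^{t-2}$ for $2 \le t \le k+1$ (together with $F_1^{(k)} = 1 = 2^0$). So part (a) should follow almost immediately from the earlier work.

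The bulk of the argument concerns $n > m$, and the key structural fact to exploit is that $F_n^{(k)}$ is very close to $2^{n-2}$ when $n$ is not too large, since $F_n^{(k)} < 2^{n-2}$ always and equality-type estimates like \eqref{segundos-k} hold in the range $n \le 2k+2$; more generally the dominant root of the characteristic polynomial is close to $2$. The first main step is to bound $n$ in terms of $k$. I would write $2^a = F_n^{(k)} + F_m^{(k)}$, use the Binet-type / Dresden formula to express $F_n^{(k)}$ in terms of the dominant root $\alpha = \alpha(k) \in (2 - 2^{-k}, 2)$, and set up a linear form in logarithms of the shape $\Lambda = a\log 2 - n\log\alpha - \log(\text{constant})$. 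Applying a Matveev-type lower bound and comparing with the natural upper bound coming from the smallness of $\Lambda$, I expect to get $n < C(k)$ for an explicit polynomial or exponential function of $k$, and then a second linear form (now separating the two terms $F_n^{(k)}$ and $F_m^{(k)}$) to control $m$. The trichotomy between $a \neq n-2$ (part (b)) and $a = n-2$ (part (c)) arises naturally because $\log 2$ and $\log\alpha$ are extremely close, so the estimate forces $a$ to equal $n-2$ except in a small, completely explicit set of exceptional configurations that one checks by hand, yielding the single solution $(2,1,1)$ in part (b).

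For part (c), with $a = n-2$ the equation rearranges to $2^{n-2} - F_n^{(k)} = F_m^{(k)}$, and here the explicit formula \eqref{segundos-k} becomes the decisive tool: in the range $k+2 \le n \le 2k+2$ it gives $2^{n-2} - F_n^{(k)} = (n-k)\cdot 2^{n-k-3}$, so I must solve $F_m^{(k)} = (n-k)\cdot 2^{n-k-3}$. Writing $n - k = 2^\ell + r$ and matching this against the power-of-two-adjacent forms of the small $k$-Fibonacci numbers, one is led to the parametrization $n = k + 2^\ell$, $m = 2^\ell + \ell - 1$ after identifying when $(n-k)\cdot 2^{n-k-3}$ is again a (shifted) $k$-Fibonacci number. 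The constraint $2^\ell + \ell - 2 \le k$ then ensures $n \le 2k+1$ so that \eqref{segundos-k} is applicable and $m \le k+1$ so that $F_m^{(k)}$ is a genuine power of two, keeping the whole configuration inside the validity range of the explicit formulas.

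The hard part, as usual in this circle of problems, will be reducing the astronomically large bound on $n$ (and on $a$, $m$) produced by the theory of linear forms in logarithms down to a manageable range, because the bound depends on $k$ and $k$ itself is unbounded. The standard remedy, which I would follow, is to treat small $k$ (say up to some explicit threshold) computationally via the Baker–Davenport reduction lemma applied to the relevant continued-fraction / LLL data, while for large $k$ one uses the analytic closeness of $\alpha$ to $2$ to show that the linear form cannot be small unless $a = n-2$, collapsing the problem onto the explicit-formula computation of part (c). Managing the uniformity in $k$ — so that the reduction works simultaneously for all large $k$ rather than one $k$ at a time — is the genuine technical obstacle, and it is precisely where the estimates \eqref{three-recursion}–\eqref{segundos-k} and the sharp control on $\alpha$ must be deployed carefully.
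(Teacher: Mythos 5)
Your overall strategy coincides with the paper's: part $(a)$ is quoted from \cite{BL2}; two applications of Matveev's theorem bound $n$ polynomially in $k$; Baker--Davenport reduction handles small $k$; and for large $k$ the fact that $\alpha$ is within $O(2^{-k})$ of $2$ turns the Binet approximation into $\left|1+2^{m-n}-2^{a-(n-2)}\right|<6/2^{k/2}$, which kills the case $a<n-2$ outright and reduces $(b)$ to the computation. Up to that point your sketch is faithful to the actual argument.

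There is, however, a genuine gap in your treatment of part $(c)$. You write that the constraint $2^{\ell}+\ell-2\le k$ ``ensures $n\le 2k+1$ so that \eqref{segundos-k} is applicable,'' but that constraint is part of the \emph{conclusion}: a priori, for large $k$ and $a=n-2$, all you know is $n<6.66\times 10^{27}k^{7}\log^{5}k$, which vastly exceeds $2k+2$, and the identity \eqref{segundos-k} is simply unavailable for $n>2k+2$. Moreover, the two-term estimate (Lemma \ref{estimaciondeltaeta2}, i.e.\ ``$\alpha$ exponentially close to $2$'') is useless here: with $a=n-2$ it only yields $2^{m-n}<6/2^{k/2}$, i.e.\ $n-m>k/2-3$, and gives no upper bound on $n$ relative to $k$. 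This is exactly the obstruction the authors flag in the introduction, and overcoming it is the paper's main new technical content: they invoke the Cooper--Howard expansion (Lemma \ref{teoHoward}) to write $F_n^{(k)}=2^{n-2}\left(1-\tfrac{n-k}{2^{k+1}}+s_1\right)$ with $|s_1|<4n^2/2^{2k+2}$, deduce the exact relation $2^{m-n+k+1}=n-k$, then push the expansion one term further to extract the integer identity $(n-2k-1)(n-2k)-2=2(n-k)(m-k)$, which is incompatible with $n>2k+2$. You would also need to dispose separately of the subcase $n\le 2k+2$ with $m>k+1$ (where $F_m^{(k)}$ is not a power of two), which the paper does by a short divisibility argument; your sketch silently assumes $m\le k+1$. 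Without these steps the proof of $(c)$ is incomplete.
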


Let us give a brief overview of our strategy for proving Theorem
\ref{teo1}. The proof of the assertion $(a)$ follows from the work
of \cite{BL2}. To prove assertions $(b)$ and $(c)$, we first rewrite equation \eqref{eqn1} in suitable ways in
order to obtain two different linear forms in logarithms of
algebraic numbers which are both nonzero and small. Next, we use
twice a lower bound on such nonzero linear forms in logarithms of
algebraic numbers due to Matveev \cite{Matveev} to bound $n$ polynomially in
terms of $k$. When $k$ is small, we use some properties of
continued fractions to reduce the upper bounds to cases that can
be treated computationally. When $k$ is large and $a\neq n-2$, we use some
estimates from \cite{BL2,BL1} based on the fact that the dominant
root of $F^{(k)}$ is exponentially close to 2.  However, when $k$ is large and $a=n-2$, the estimates given in \cite{BL2,BL1} are not enough and therefore we need to get more accurate estimates to finish the job.


\section{Preliminary results}

\noindent Before proceeding further, we shall recall some facts
and properties of the $k-$Fibonacci sequence which will be used
later. First, it is known that the characteristic polynomial of
$F^{(k)}$, namely
\[
\Psi_k(x)=x^k-x^{k-1}-\cdots-x-1,
\]
is irreducible over $\mathbb{Q}[x]$ and has just one zero outside
the unit circle. Throughout this paper, $\alpha:=\alpha(k)$
denotes that single zero, which is a Pisot number of degree $k$
since the other zeros of the characteristic polynomial $\Psi_k(x)$
are strictly inside the unit circle (see, for example, \cite{Mi},
\cite{Mil} and \cite{DAW}). Moreover, it is known from Lemma 2.3
in \cite{HY} that $\alpha(k)$ is located between $2(1-2^{-k})$ and
2, a fact rediscovered by Wolfram \cite{DAW}. To simplify
notation, we will omit the dependence on $k$ of $\alpha$.

We now consider for an integer $k\geq 2$, the function
\begin{equation}
\label{def-f_k}
f_k(x)=\frac{x-1}{2+(k+1)(x-2)} \quad \text{for} \quad x>2(1-2^{-k}).
\end{equation}
With this notation, Dresden and Du \cite{D-Du} gave the following ``Binet--like" formula for $F^{(k)}$:
\begin{equation} \label{eq:binetF}
F_n^{(k)}=\sum_{i=1}^{k}f_k({\alpha^{(i)}}){\alpha^{(i)}}^{n-1},
\end{equation}
where $\alpha:=\alpha^{(1)},\ldots,\alpha^{(k)}$ are the zeros of
$\Psi_k(x)$. It was also proved in \cite{D-Du} that the
contribution of the zeros which are inside the unit circle to the
formula \eqref{eq:binetF} is very small, namely that the
approximation
\begin{equation}
\label{error}
\left|F_n^{(k)}-f_k(\alpha)\alpha^{n-1}\right|<\frac{1}{2}  \quad \text{holds~for all}\quad n\geq 2-k.
\end{equation}
When $k=2$, one can easily prove by induction that
\begin{equation} \label{desfibluc}
\alpha^{n-2}\leq F_n\leq \alpha^{n-1} \quad \text{for all} \quad n\geq 1.
\end{equation}
It was proved in \cite{BL1} that
\begin{equation}\label{deskfib}
\alpha^{n-2} \leq F_n^{(k)}\leq \alpha^{n-1} \quad \text{holds for all}\quad n\geq 1\quad \text{and}\quad k\ge 2,
\end{equation}
which shows that \eqref{desfibluc} holds for the $k-$Fibonacci
sequence as well. The observations from expressions \eqref{eq:binetF}
to \eqref{deskfib} lead us to call to $\alpha$ the {\it dominant zero} of $F^{(k)}$.

In order to prove Theorem \ref{teo1}, we need to use several times
a Baker type lower bound for a nonzero linear form in logarithms
of algebraic numbers and such a bound, which plays an important
role in this paper, was given by Matveev \cite{Matveev}. We begin
by recalling some basic notions from algebraic number theory.

Let $\eta$ be an algebraic number of degree $d$ with minimal
primitive polynomial over the integers
\[
a_0x^d+a_1x^{d-1}+\cdots+a_d=a_0\prod_{i=1}^{d}(x-\eta^{(i)}),
\]
where the leading coefficient $a_0$ is positive and the
$\eta^{(i)}$'s are the conjugates of $\eta$. Then
\[
h(\eta)=\frac{1}{d}\left(\log a_0+\sum_{i=1}^{d}\log\left(\max\{|\eta^{(i)}|,1\}\right)\right),
\]
is called the \emph{logarithmic height} of $\eta$.

In particular, if $\eta=p/q$ is a rational number with $\gcd(p,q)=1$ and $q>0$,
then $h(\eta)=\log \max \{|p|,q\}$. The following properties of the function logarithmic height
$h(\cdot)$, which will be used in the next sections without
special reference, are also known:
\begin{eqnarray*}
h(\eta\pm\gamma) & \leq & h(\eta) + h(\gamma)+\log 2,\\
h(\eta\gamma^{\pm 1}) & \leq & h(\eta)+h(\gamma),\\
h(\eta^{s}) & = & |s|h(\eta)\qquad (s\in \Z).
\end{eqnarray*}
With the previous notation, Matveev (see \cite{Matveev} or Theorem 9.4 in \cite{Bug}) proved the following deep theorem.

\begin{teo}[Matveev's theorem]\label{teoMatveev}
Assume that $\gamma_1, \ldots, \gamma_t$ are positive real algebraic numbers in a real algebraic number field $\mathbb{K}$ of degree $D$, $b_1,\ldots,b_t$ are rational integers, and
\[
\Lambda:=\gamma_1^{b_1}\cdots\gamma_t^{b_t}-1,
\]
is not zero. Then
\begin{equation*} \label{desmatveev}
|\Lambda|>\exp\left(-1.4\times 30^{t+3}\times t^{4.5}\times D^2(1+\log D)(1+\log B)A_1\cdots A_t\right),
\end{equation*}
where
\[
B\geq \max\{|b_1|,\ldots,|b_t|\},
\]
and
\[
A_i\geq \max\{Dh(\gamma_i),|\log \gamma_i|, 0.16\}, \quad \text{for all} \quad  i=1,\ldots,t.
\]
\end{teo}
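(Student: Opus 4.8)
The plan is to establish this lower bound through the \emph{Gel'fond--Baker transcendence method}, arguing by contradiction. Suppose the stated inequality fails, i.e.\ that
\[
|\Lambda|<\exp\left(-1.4\times 30^{t+3}\times t^{4.5}\times D^2(1+\log D)(1+\log B)A_1\cdots A_t\right).
\]
Since each $\gamma_i$ is a positive real number, I would first pass from the multiplicative form to the additive linear form $L=b_1\log\gamma_1+\cdots+b_t\log\gamma_t$, using the principal branch of the logarithm. Because $\Lambda=e^{L}-1$, the smallness hypothesis on $|\Lambda|$ forces $|L|$ to be comparably small, so it suffices to obtain a lower bound of the required shape for $|L|$ and then unwind.

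Next, following the classical scheme, I would introduce an \emph{auxiliary function}
\[
\Phi(z)=\sum_{\lambda_1=0}^{N_1}\cdots\sum_{\lambda_t=0}^{N_t} p(\lambda_1,\dots,\lambda_t)\,\gamma_1^{\lambda_1 z}\cdots\gamma_t^{\lambda_t z},
\]
whose unknown rational-integer coefficients $p(\lambda_1,\dots,\lambda_t)$ are produced by Siegel's lemma so that $\Phi$ together with its derivatives up to a chosen order $T$ vanishes at the integer points $z=0,1,\dots,S$. The parameters $N_i,S,T$ must be calibrated against $B$, $D$ and the quantities $A_i\ge\max\{Dh(\gamma_i),|\log\gamma_i|,0.16\}$ so that the linear system is sufficiently underdetermined to admit a nontrivial solution with controlled coefficient size. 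The role of $L$ is that, modulo this tiny quantity, one has $\gamma_t^{b_t}\approx(\gamma_1^{b_1}\cdots\gamma_{t-1}^{b_{t-1}})^{-1}$, so the monomials can be rewritten to reduce the effective number of independent exponentials and make the vanishing conditions tractable.

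I would then confront two competing estimates on the derivative values $\Phi^{(j)}(s)$. On the \emph{arithmetic} side, each such value lies (after clearing an explicit denominator governed by $a_0$ and $D$) in $\mathbb{K}$ with bounded height, so by a Liouville inequality it is either $0$ or bounded below by $\exp(-c_1 D(\cdots))$. On the \emph{analytic} side, the smallness of $|L|$ combined with the maximum modulus principle and Schwarz's lemma shows that $\Phi$, which already vanishes to high order on $\{0,1,\dots,S\}$, is extremely small on a larger disc, whence its derivatives at further integer points are extremely small as well. An \emph{extrapolation} argument, by induction on the vanishing order, amplifies this and shows that $\Phi^{(j)}(s)=0$ on a range strictly larger than the one imposed by construction. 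Finally, a \emph{zero (multiplicity) estimate} asserts that a construction of this type cannot vanish to such high order at so many points unless the $\gamma_i$ satisfy a nontrivial multiplicative relation, which would force $\Lambda=0$, contrary to hypothesis; confronting the analytic upper bound with the Liouville lower bound on a value that the zero estimate guarantees to be nonzero produces the contradiction, and tracing the inequalities back yields the explicit constant.

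\textbf{The main obstacle} is the precise bookkeeping of the numerical constants. The qualitative argument above underlies every version of Baker's inequality; Matveev's contribution is the sharp parameter optimization, a refined zero estimate, and — decisively — a device that avoids the loss incurred by \emph{Kummer theory} (the descent through the subfields $\mathbb{Q}(\gamma_i^{1/p})$), which is exactly what permits the clean factor $30^{t+3}t^{4.5}$ and the mild $D^2(1+\log D)$ dependence rather than the weaker constants of earlier formulations. Ensuring that the heights enter both the analytic and arithmetic estimates only through the single product $A_1\cdots A_t$, while correctly exploiting the real-field hypothesis, is the most delicate point and the one I expect to require the greatest care.
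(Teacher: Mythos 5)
The paper does not prove this statement at all: Matveev's theorem is quoted as a deep external result, with its proof deferred to Matveev's original paper \cite{Matveev} (or Theorem 9.4 of \cite{Bug}). So there is no internal argument to compare yours against; the only question is whether your sketch could stand as a proof on its own, and it cannot.

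Your outline correctly names the ingredients of the Gel'fond--Baker method --- auxiliary function with coefficients from Siegel's lemma, arithmetic lower bound via a Liouville-type inequality, analytic upper bound via Schwarz's lemma and the maximum modulus principle, extrapolation, and a zero estimate --- and it even correctly identifies Matveev's distinctive contribution (the device avoiding the loss from Kummer descent). But the content of the statement is not the qualitative fact that $|\Lambda|$ admits \emph{some} lower bound of Baker type; that is classical and was known decades before Matveev. The content is the specific numerical constant $1.4\times 30^{t+3}\times t^{4.5}\times D^2(1+\log D)$, together with the precise way the data enter: the heights only through $A_i\geq\max\{Dh(\gamma_i),|\log\gamma_i|,0.16\}$, the exponents only through $1+\log B$, and the degree only through $D^2(1+\log D)$. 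Your proposal explicitly defers exactly this (``the precise bookkeeping of the numerical constants,'' ``tracing the inequalities back yields the explicit constant''), which is to say it defers the entire theorem. The calibration of the parameters $N_i$, $S$, $T$, the exact multiplicity estimate used, and the anti-Kummer device are not routine checks one can wave through; they occupy the bulk of Matveev's long paper, and without carrying them out no particular constant --- let alone this one --- is produced. What you have written is an accurate description of how results of this type are proved, not a proof of this result; in the context of the present paper, the correct disposition is simply the citation.
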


In 1998, Dujella and Peth\H o in \cite[Lemma 5$(a)$]{DP} gave a
version of the reduction method based on the Baker--Davenport
lemma \cite{Baker-Davenport}.  We present the following lemma, which is an immediate variation of the result due to
Dujella and Peth\H o from \cite{DP}, and will be one of the key tools used in this paper to reduce the upper bounds on the
variables of the Diophantine equation \eqref{eqn1}.

\begin{lema} \label{reduce}
Let $M$ be a positive integer, let $p/q$ be a convergent of the continued fraction of the irrational $\gamma$ such that $q>6M$,
and let $A,B,\mu$ be some real numbers with $A>0$ and $B>1$. Let further $\epsilon=||\mu q||-M||\gamma q||$, where $||\cdot||$ denotes the distance from the nearest integer. If $\epsilon >0$, then there is no solution to the inequality
\begin{equation} \label{expDP}
0<|u\gamma-v+\mu|<AB^{-w},
\end{equation}
in positive integers $u,v$ and $w$ with
\[
u\leq M \quad\text{and}\quad w\geq \frac{\log(Aq/\epsilon)}{\log B}.
\]
\end{lema}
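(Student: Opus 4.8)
The plan is to argue by contradiction: I would assume that some triple of positive integers $(u,v,w)$ satisfies \eqref{expDP} together with $u\le M$ and $w\ge \log(Aq/\epsilon)/\log B$, and then produce a lower bound for $|u\gamma-v+\mu|$ that is incompatible with the upper bound $AB^{-w}$ once $w$ is this large. The whole argument hinges on establishing a clean estimate of the shape $|u\gamma-v+\mu|\ge \epsilon/q$.

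To get that lower bound I would multiply $u\gamma-v+\mu$ by $q$ and analyze $uq\gamma-vq+\mu q$. Writing $p$ for the numerator of the convergent $p/q$, the first key point is that, since $p/q$ is a convergent, $|q\gamma-p|<1/q$; because $q>6M\ge 6$, this distance is well below $1/2$, so $p$ is exactly the nearest integer to $q\gamma$ and hence $|q\gamma-p|=||q\gamma||$. This is precisely where the hypothesis $q>6M$ enters. Next, since $up$ and $vq$ are integers, I would use that for any integer $n$ one has $|x-n|\ge ||x||$, together with the subadditivity estimate $||a+b||\ge ||a||-|b|$ for the distance-to-the-nearest-integer function. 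Applying these with $x=uq\gamma+\mu q$ and absorbing the integer part $up$ gives
\[
q\,|u\gamma-v+\mu|=|uq\gamma-vq+\mu q|\ge ||uq\gamma+\mu q||=||u(q\gamma-p)+\mu q||\ge ||\mu q||-u\,|q\gamma-p|.
\]
Using $u\le M$ and $|q\gamma-p|=||q\gamma||$, the right-hand side is at least $||\mu q||-M||q\gamma||=\epsilon$, which is positive by hypothesis.

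Having established $|u\gamma-v+\mu|\ge \epsilon/q$, I would simply combine it with the assumed bound $|u\gamma-v+\mu|<AB^{-w}$ to obtain $\epsilon/q<AB^{-w}$, that is $B^{w}<Aq/\epsilon$, that is $w<\log(Aq/\epsilon)/\log B$. This contradicts the standing hypothesis $w\ge \log(Aq/\epsilon)/\log B$, so no such triple $(u,v,w)$ can exist, which is the assertion of the lemma.

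I do not expect any genuinely hard step here: the proof is essentially a careful bookkeeping exercise with the function $||\cdot||$. The only point demanding real attention is the identification $|q\gamma-p|=||q\gamma||$, since the definition of $\epsilon$ refers to $||q\gamma||$ rather than to $|q\gamma-p|$; it is the convergent hypothesis, made effective through $q>6M$, that lets me replace one by the other. The positivity of $\epsilon$ then does double duty: it both makes the lower bound meaningful and forces $u\gamma-v+\mu\neq 0$, consistent with the strict inequality $0<|u\gamma-v+\mu|$ in \eqref{expDP}.
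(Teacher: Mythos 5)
Your proposal is correct and follows essentially the same route as the paper: multiply by $q$, absorb the integers $vq$ and $up$, use $|q\gamma-p|=\|q\gamma\|$ (valid since $p/q$ is a convergent and $q>6M$ forces $|q\gamma-p|<1/2$), and derive the lower bound $\epsilon\le q\,|u\gamma-v+\mu|<qAB^{-w}$, which contradicts $w\ge \log(Aq/\epsilon)/\log B$. Your write-up is in fact slightly more careful than the paper's in justifying the identity $\|q\gamma\|=|q\gamma-p|$.
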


\begin{proof}
The proof proceeds similarly to that of Lemma 5 in \cite{DP}. Indeed, assume that $0< u \leq M$. Multiplying \eqref{expDP} by $q$, and keeping in mind that $||q\gamma||=|p-q\gamma|$ because $p/q$ is a convergent of $\gamma$, we get
\begin{align*}
qAB^{-w}&>|q\mu-(qv-up)-u(p-q\gamma)|\\
&\geq |q\mu-(qv-up)|-u|p-q\gamma|\\
&\geq| |q\mu||-u||q\gamma||\\
&\geq ||q\mu||-M||q\gamma||=\epsilon,
\end{align*}
giving
\[
w<\frac{\log(Aq/\epsilon)}{\log B}.
\]
\end{proof}

To conclude this section, we present a useful lemma that will be used later.

\begin{lema}
\label{estimaciondeltaeta} For $k\geq 2$, let $\alpha$ be the dominant root of $F^{(k)}$, and consider the function $f_k(x)$ defined in
\eqref{def-f_k}. Then:

\begin{enumerate}

\item[$(i)$] Inequalities
\[
1/2 < f_k(\alpha) < 3/4 ~~~~~ \text{and} ~~~~~ |f_k(\alpha^{(i)})|< 1, ~~ 2\le i \le k
\]
hold. So, the number $f_k(\alpha)$ is not an algebraic integer.

\item[$(ii)$] The logarithmic height function satisfies $h(f_k(\alpha))<3\log k$.

\end{enumerate}
\end{lema}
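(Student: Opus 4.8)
The plan is to prove the two parts of Lemma~\ref{estimaciondeltaeta} by direct analysis of the explicit rational function $f_k(x)=(x-1)/(2+(k+1)(x-2))$ together with the known location of the roots of $\Psi_k$.

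For part $(i)$, I would first treat the dominant root $\alpha$. Since $\alpha$ lies in the interval $(2(1-2^{-k}),2)$, the numerator $\alpha-1$ lies just below $1$, while the denominator $2+(k+1)(\alpha-2)$ is $2$ minus a small positive multiple of $(k+1)$. The idea is to substitute the endpoint bounds $\alpha>2(1-2^{-k})=2-2^{1-k}$ and $\alpha<2$ into the monotone expression for $f_k$ and check that the resulting values straddle $1/2$ and $3/4$. One should verify that $f_k$ is monotone on the relevant interval (the derivative has constant sign because the denominator does not vanish there), so that the extremal values are attained at the endpoints; this reduces the inequality $1/2<f_k(\alpha)<3/4$ to two elementary numerical estimates in $k$, which hold for all $k\geq 2$. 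For the conjugate roots, the key fact is that each $\alpha^{(i)}$ with $2\leq i\leq k$ lies strictly inside the unit circle, so $|\alpha^{(i)}-1|<2$ while the denominator $|2+(k+1)(\alpha^{(i)}-2)|$ is bounded below; estimating this lower bound (using $|\alpha^{(i)}|<1$, hence $\mathrm{Re}(\alpha^{(i)}-2)<-1$, so the real part of the denominator is large and negative) gives $|f_k(\alpha^{(i)})|<1$. Finally, the claim that $f_k(\alpha)$ is not an algebraic integer follows immediately: an algebraic integer all of whose conjugates satisfy $|f_k(\alpha^{(i)})|<1$ together with $0<f_k(\alpha)<1$ would have to be zero, yet $f_k(\alpha)>1/2\neq 0$, a contradiction.

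For part $(ii)$, I would bound the logarithmic height $h(f_k(\alpha))$ using the height rules stated just before Matveev's theorem. Writing $f_k(\alpha)=(\alpha-1)/\bigl((k+1)\alpha-2k\bigr)$ after clearing the constant, I would apply $h(\eta\gamma^{\pm1})\leq h(\eta)+h(\gamma)$, $h(\eta\pm\gamma)\leq h(\eta)+h(\gamma)+\log 2$, and $h(\eta^s)=|s|h(\eta)$ to decompose the height into contributions from $\alpha$, from the integer $k+1$, and from the integer $2k$. The essential input is the height of the Pisot number $\alpha$: since $\alpha$ is a root of the monic integer polynomial $\Psi_k$ with exactly one conjugate outside the unit circle, $h(\alpha)=\tfrac{1}{k}\log\alpha<\tfrac{\log 2}{k}$, which is tiny. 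The integer contributions give terms of size roughly $\log(2k)$ and $\log(k+1)$, and assembling these with the constants from the height inequalities yields a bound of the form $h(f_k(\alpha))<3\log k$ for all $k\geq 2$.

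The main obstacle I anticipate is the bookkeeping in part $(ii)$: one must be careful that $f_k(\alpha)$ is written in a form whose height can be controlled cleanly, since the naive expression $(x-1)/(2+(k+1)(x-2))$ mixes a non-integer constant in the denominator. Rewriting the denominator as $(k+1)\alpha-2k$ (or as $(k+1)(\alpha-2)+2$) and tracking whether the relevant minimal polynomial coefficients are coprime is the delicate point, because an overcounted leading coefficient could spoil the $3\log k$ target. The secondary difficulty is confirming the lower bound on the denominator for the conjugate roots in part $(i)$ uniformly in $k$, where one must use not just $|\alpha^{(i)}|<1$ but the sharper control on the real part of $\alpha^{(i)}$ coming from the geometry of the roots inside the unit disk.
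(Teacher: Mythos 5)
Your treatment of part $(i)$ is essentially the paper's own argument: monotonicity of $f_k$ plus evaluation at the endpoints $2$ and $2(1-2^{-k})$ for $1/2<f_k(\alpha)<3/4$; the lower bound $|2+(k+1)(\alpha^{(i)}-2)|>k-1$ for the conjugates (which you correctly derive from the real part); and the norm argument for non-integrality. One point you pass over: both of your generic estimates degenerate at $k=2$. There $f_2\bigl(2(1-2^{-2})\bigr)=f_2(3/2)=1$, so the endpoint evaluation only gives $f_2(\alpha)<1$, not $<3/4$, and the denominator bound $k-1=1$ only gives $|f_2(\alpha^{(2)})|<2$. The paper closes these by computing $f_2\bigl((1\pm\sqrt5)/2\bigr)$ explicitly, and your claim that the two estimates ``hold for all $k\geq 2$'' is not correct as stated; $k=2$ needs a separate numerical check (and for the conjugate bound one should note that $k=3$ still works only because both inequalities $|\alpha^{(i)}-1|<2$ and $|{\rm denominator}|>2$ are strict).

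The genuine gap is in part $(ii)$. The paper does not reprove this bound --- it cites \cite{BL1} --- but the route you propose does not reach $3\log k$. Applying $h(\eta\gamma^{-1})\le h(\eta)+h(\gamma)$ and $h(\eta\pm\gamma)\le h(\eta)+h(\gamma)+\log 2$ to $(\alpha-1)/\bigl((k+1)\alpha-2k\bigr)$ gives at best
\[
h(f_k(\alpha))\;\le\; 2h(\alpha)+\log(k+1)+\log(2k)+2\log 2\;=\;\log(k+1)+\log k+3\log 2+\tfrac{2\log\alpha}{k},
\]
which is roughly $2\log k+\log 8$ and exceeds $3\log k$ for every $k\le 10$ (at $k=10$ it is $\approx 6.92$ versus $3\log 10\approx 6.91$; at $k=2$ it is $\approx 4.35$ versus $3\log 2\approx 2.08$). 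So the ``assembling'' step fails precisely where the lemma is hardest. The argument of \cite{BL1} instead exploits part $(i)$: since every conjugate $f_k(\alpha^{(i)})$ lies strictly inside the unit disc, the height is governed solely by the leading coefficient of the integer polynomial $\prod_{i=1}^k\bigl(((k+1)\alpha^{(i)}-2k)x-(\alpha^{(i)}-1)\bigr)$, which equals $\pm(k+1)^k\Psi_k\bigl(2k/(k+1)\bigr)$ and has absolute value less than $2(2k)^k$; this yields $h(f_k(\alpha))<\log(2k)+\frac{\log 2}{k}<3\log k$ for all $k\ge 2$. Either adopt that computation or supplement your decomposition with a direct verification for $2\le k\le 10$; as written, the proof of $(ii)$ does not close.
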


\begin{proof}
A straightforward verification shows that $\partial_x f_k (x)<0$.
Indeed,
$$
\partial_x f_k(x)=\frac{1-k}{\left((2+(k+2)(x-2))\right)^2}<0,~~~\text{for~~all}~~~k\ge 2.
$$
From this, we conclude that
\[
{1}/{2}= f_k(2) < f_k(\alpha) <
f_k\left(2(1-2^{-k})\right)=\frac{2^{k-1}-1}{2^k-k-1}\le 3/4, ~~\text{for all}~~ k\ge 3.
\]
While, $f_2((1+{\sqrt{5}})/2) =\sqrt{5}(1+\sqrt{5})/10=0.72360\ldots \in(1/2,3/4)$. On the other hand, as $|\alpha^{(i)}|<1$, then
$|\alpha^{(i)}-1|<2$ and $|2+(k+1)(\alpha^{(i)}-2)|>k-1$, so
$|f_k(\alpha^{(i)})|<1$ for all $k \ge 3$. Further,
$f_2((1-\sqrt{5})/{2}) = 0.2763\ldots$. This proves the first part
of $(i)$. Assume now that $f_k(\alpha)$ is an algebraic integer. Then
its norm (from $\mathbb{K}=\mathbb{Q}(\alpha)$ to $\mathbb{Q}$) is an integer.
Applying the norm of $\mathbb{K}$ over $\mathbb{Q}$ and taking
absolute values, we obtain that
\[
1\le |\mathbf{N}_{\mathbb{K}/\mathbb{Q}}(f_k(\alpha))| =
f_k(\alpha)\prod_{i=2}^k|f_k(\alpha^{(i)})|.
\]
However, $f_k(\alpha) < 3/4$ and $|f_k(\alpha^{(i)})|<1$ for $i=2,\ldots,k$ and all $k\geq 2$, contradicting the above inequality. Hence the result of $(i)$. The proof of $(ii)$ can be consulted in \cite{BL1}.
\end{proof}


\section{An inequality for $n$ in terms of $k$}

\noindent Assume throughout that equation \eqref{eqn1} holds.
First of all, observe that if $n=m$, then the original equation
\eqref{eqn1} becomes $F_n^{(k)}=2^{a-1}$. But the only solutions
of this latter equation with $k\geq 3$ are given by $(n,a)\in\{(1,1),(t,t-1)\}$
for all $2\leq t \leq k+1$ in view of the previous work of
\cite{BL2}. Moreover, note that we can assume $n\geq k+2$, since
otherwise $F_n^{(k)}$ and $F_m^{(k)}$ would be powers of 2, and
therefore, the only additional solution of \eqref{eqn1} in this instance is
given by $(n,m,a)=(2,1,1)$ as can be easily seen. So, from now on,
we assume that $n>m\ge2$, $n\ge k+2$ and $a\ge2$.

Combining \eqref{eqn1} with the fact that $F_t^{(k)}\leq 2^{t-2}$
for all $t\geq 2$, one gets
\[
2^{a} \leq 2^{n-2}+2^{m-2}=2^{n-2}(1+2^{m-n})\leq
2^{n-2}(1+2^{-1})<2^{n-1},
\]
implying $a\leq n-2$. This fact is fundamental in our research to
the point that we distinguish two cases for reasons soon to be seen, namely $a<n-2$ and $a =
n-2$. However, we shall now use linear forms in logarithms to
bound $n$ polynomially on $k$, without any restriction on $a$. 

Indeed, by using \eqref{eqn1} and \eqref{error}, we get that
\begin{equation*} \label{util1}
\left|2^a-f_k(\alpha)\alpha^{n-1}\right|< \frac{1}{2}+F_m^{(k)}\leq  \frac{1}{2}+\alpha^{m-1},
\end{equation*}
where we have also used the right--hand inequality from \eqref{deskfib}.
Dividing both sides of the above inequality by
$f_k(\alpha)\alpha^{n-1}$, we obtain
\begin{equation} \label{deslambda1}
\left|2^{a}\cdot\alpha^{-(n-1)}\cdot
(f_k(\alpha))^{-1}-1\right|<\frac{3}{\alpha^{n-m}},
\end{equation}
because $f_k(\alpha)>1/2$ from Lemma \ref{estimaciondeltaeta}$(i)$.

In a first application of Matveev's result Theorem \ref{teoMatveev}, we take $t:=3$ and
\[
\gamma_1:=2, \quad \gamma_2:=\alpha \quad \text{and} \quad  \gamma_3:=f_k(\alpha).
\]
We also take $b_1:=a$, $b_2:=-(n-1)$ and $b_3:=-1$. We begin by
noticing that the three numbers $\gamma_1,\gamma_2,\gamma_3$ are
positive real numbers and belong to
$\mathbb{K}:=\mathbb{Q}(\alpha)$, so we can take
$D:=[\mathbb{K}:\mathbb{Q}]=k$.  The left--hand size of
\eqref{deslambda1} is not zero. Indeed, if this were zero, we
would then get that $f_k(\alpha)=2^a\cdot \alpha^{-(n-1)}$ and so
$f_k(\alpha)$ would be an algebraic integer, in contradiction to
Lemma \ref{estimaciondeltaeta}$(i)$.

Since $h(\gamma_1)=\log 2$ and $h(\gamma_2)=(\log\alpha)/k<(\log
2)/k=(0.693147\ldots)/k$, it follows that we can take $A_1:=k\log
2$ and  $A_2:=0.7$. Further, in view of Lemma
\ref{estimaciondeltaeta}$(ii)$, we have that $h(\gamma_3) <3\log
k$, so we can take $A_3:=3k\log k$. Finally, by recalling that
$a\leq n-2$, we can take $B:=n-1$.

Then, Matveev's theorem implies that a lower bound on the left--hand side of \eqref{deslambda1} is
\[
\exp\left(-C_1(k)\times (1+\log (n-1))\,(k\log 2)\,(0.7)\,(3k\log k) \right),
\]
where $C_1(k):=1.4\times 30^{6}\times 3^{4.5}\times
k^2\times(1+\log k)<1.5\times 10^{11}\,k^2\,(1+\log k)$. Comparing
this with the right--hand side of \eqref{deslambda1}, taking
logarithms and then performing the respective calculations, we get
that
\begin{equation}\label{aplic1matveev}
(n-m)\log \alpha<8.75\times 10^{11}\,k^4\,\log^2k\,\log(n-1).
\end{equation}
Let us now get a second linear form in logarithms. To this end, we use \eqref{eqn1} and \eqref{error} once again to obtain
\begin{equation} \label{expresion<2}
\left|2^a-f_k(\alpha)\alpha^{n-1}\left(1+\alpha^{m-n}\right)\right|=\left|\left(F_n^{(k)}-f_k(\alpha)\alpha^{n-1}\right)+\left(F_m^{(k)}-f_k(\alpha)\alpha^{m-1}\right)\right|< 1.
\end{equation}
Dividing both sides of the above inequality by the second term of the left--hand side, we get
\begin{equation}    \label{deslambda2}
\left|2^{a}\cdot \alpha^{-(n-1)}\cdot
(f_k(\alpha)(1+\alpha^{m-n}))^{-1}-1\right|<\frac{2}{\alpha^{n-1}}.
\end{equation}
In a second application of Matveev's theorem, we take the parameters $t:=3$ and
\[
\gamma_1:=2, \quad \gamma_2:=\alpha, \quad \gamma_3:=f_k(\alpha)(1+\alpha^{m-n}).
\]
We also take $b_1:=a$, $b_2:=-(n-1)$ and $b_3:=-1$. As before,
$\mathbb{K}:=\Q(\alpha)$ contains $\gamma_1,\gamma_2$ and
$\gamma_3$ and has degree $D:=k$. To see why the left--hand side of
\eqref{deslambda2} is not zero, note that otherwise, we would get
the relation $2^{a}=f_k(\alpha)(\alpha^{n-1}+\alpha^{m-1})$. Now,
conjugating with an automorphism $\sigma$ of the Galois group of
$\Psi_k(x)$ over ${\mathbb Q}$ such that
$\sigma(\alpha)={\alpha^{(i)}} $ for some $i>1$, and then taking
absolute values, we have $2^a=|f_k(\alpha^{(i)})||{\alpha^{(i)}}^{n-1}+{\alpha^{(i)}}^{m-1}|<2$,
since Lemma \ref{estimaciondeltaeta}$(i)$. But the last inequality
above is not possible because $a\ge2$. Hence, indeed the
left--hand side of inequality \eqref{deslambda2} is nonzero.

In this application of Matveev's theorem we take $A_1:=k\log 2$,
$A_2:=0.7$ and $B:=n-1$ as we did before. Let us now estimate
$h(\gamma_3)$. In view  of the properties of $h(\cdot)$ and Lemma
\ref{estimaciondeltaeta}$(ii)$ once again, we have
\begin{align*}
h(\gamma_3) &< 3\log k+|m-n|\left(\frac{\log \alpha}{k}\right)+\log 2 \\
&< 4\log k+(n-m)\left(\frac{\log \alpha}{k}\right),
\end{align*}
for all $k\geq 3$. So, we can take $A_3:=4k\log k+(n-m)\log \alpha$. Now Matveev's theorem implies that a lower bound on the left--hand side of \eqref{deslambda2} is
\[
\exp\left(-C_2(k)\times (1+\log (n-1))\,(k\log 2)\,(0.7)\,(4k\log k+(n-m)\log \alpha) \right),
\]
where $C_2(k):=1.4\times 30^{6}\times 3^{4.5}\times
k^2\times(1+\log k)<1.5\times 10^{11}\,k^2\,(1+\log k)$. So,
inequality \eqref{deslambda2} yields
\[
(n-1)\log \alpha-\log 2 < 2.92\times 10^{11}\,k^3\,\log
k\,\log(n-1)\,(4k\log k+(n-m)\log\alpha).
\]
Using now \eqref{aplic1matveev} in the right--most term of the above inequality and taking into account that $1/\log\alpha<2$, we conclude, after some elementary algebra, that
\begin{equation} \label{deslog1}
n-1<5.12\times 10^{23}\,k^7\,\log^3 k\,\log^2(n-1).
\end{equation}
It easy to check that for $A\geq 100$ the inequality
\[
x<A\log^2 x  \quad\text{implies}\quad x<4A\,\log^2 A.
\]
Thus, taking $A=5.12\times 10^{23}\,k^7\,\log^3k$ and performing
the respective calculations, inequality \eqref{deslog1} yields
$n<6.654\times 10^{27} k^7\,\log^5k$. We record what we have
proved so far as a lemma.

\begin{lema}\label{cota_na}
If $(n,m,k,a)$ is a solution in positive integers of equation \eqref{eqn1} with $n>m\geq 2$ and $k\geq 3$, then both inequalities
\[
a\leq n-2 \quad \text{and} \quad n<6.66\times 10^{27} k^7\,\log^5k
\]
hold.
\end{lema}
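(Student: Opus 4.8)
The plan is to derive the lemma by applying Matveev's theorem twice to two different linear forms in logarithms built from equation~\eqref{eqn1}, exactly along the lines already laid out in the running text. The final statement packages together two claims: the easy inequality $a\le n-2$ and the polynomial bound $n<6.66\times 10^{27}k^7\log^5 k$. The first claim follows immediately from the growth bound $F_t^{(k)}\le 2^{t-2}$ for $t\ge 2$ (proved in \cite{BL2}), since then $2^a\le 2^{n-2}+2^{m-2}<2^{n-1}$. So the real work is the second inequality, and I would organize it around the two linear forms already displayed as \eqref{deslambda1} and \eqref{deslambda2}.

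First I would set up the first linear form. Starting from \eqref{eqn1} together with the Binet-type approximation \eqref{error} and the upper bound $F_m^{(k)}\le \alpha^{m-1}$ from \eqref{deskfib}, one gets $|2^a-f_k(\alpha)\alpha^{n-1}|<\tfrac12+\alpha^{m-1}$, and dividing by $f_k(\alpha)\alpha^{n-1}$ (using $f_k(\alpha)>1/2$ from Lemma~\ref{estimaciondeltaeta}) yields \eqref{deslambda1}, whose right-hand side is $3\alpha^{-(n-m)}$. I would then verify nonvanishing of the form: if $2^a\alpha^{-(n-1)}(f_k(\alpha))^{-1}=1$ then $f_k(\alpha)$ would be an algebraic integer, contradicting Lemma~\ref{estimaciondeltaeta}$(i)$. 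Feeding the heights $h(2)=\log2$, $h(\alpha)=(\log\alpha)/k<(\log2)/k$, and $h(f_k(\alpha))<3\log k$ into Matveev's theorem (Theorem~\ref{teoMatveev}) with $D=k$, $B=n-1$, and choosing $A_1:=k\log2$, $A_2:=0.7$, $A_3:=3k\log k$, comparing with the upper bound in \eqref{deslambda1} and taking logarithms produces \eqref{aplic1matveev}, a bound on $(n-m)\log\alpha$.

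Next I would build the sharper second linear form. Applying \eqref{error} to both $F_n^{(k)}$ and $F_m^{(k)}$ gives \eqref{expresion<2} with a right-hand side below $1$; dividing by $f_k(\alpha)\alpha^{n-1}(1+\alpha^{m-n})$ yields \eqref{deslambda2}, whose right-hand side is now $2\alpha^{-(n-1)}$—note the crucial gain, the exponent is $n-1$ rather than $n-m$. For nonvanishing I would argue by conjugation: if the form vanished, then $2^a=f_k(\alpha)(\alpha^{n-1}+\alpha^{m-1})$, and applying a Galois automorphism sending $\alpha\mapsto\alpha^{(i)}$ with $|\alpha^{(i)}|<1$ gives $2^a<2$ by Lemma~\ref{estimaciondeltaeta}$(i)$, contradicting $a\ge2$. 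The only new ingredient is the height of $\gamma_3:=f_k(\alpha)(1+\alpha^{m-n})$, which by subadditivity of $h(\cdot)$ is bounded by $4\log k+(n-m)(\log\alpha)/k$, so I take $A_3:=4k\log k+(n-m)\log\alpha$. A second application of Matveev bounds $(n-1)\log\alpha-\log2$ in terms of $\log(n-1)$ times $A_3$.

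The decisive step—and where I expect the real care to be needed—is combining the two bounds. I would substitute \eqref{aplic1matveev} into the factor $(n-m)\log\alpha$ appearing in $A_3$, absorb the $4k\log k$ term, and use $1/\log\alpha<2$ (valid since $\alpha>2(1-2^{-k})$ keeps $\log\alpha$ bounded away from $0$) to eliminate $(n-m)$ entirely and obtain a self-contained inequality of the shape \eqref{deslog1}, namely $n-1<5.12\times10^{23}k^7\log^3k\,\log^2(n-1)$. The main obstacle is purely analytic: this is implicit in $n$, so I would invoke the elementary fact that $x<A\log^2 x$ with $A\ge100$ forces $x<4A\log^2 A$, apply it with $A:=5.12\times10^{23}k^7\log^3 k$, and simplify the resulting $\log^2 A$ (which contributes only further powers of $\log k$ and a constant) to reach $n<6.66\times10^{27}k^7\log^5 k$. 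Keeping the constants honest through these two cascaded applications of Matveev, and ensuring the algebra that converts the implicit bound into an explicit one does not lose the claimed numerical constant, is the only genuinely delicate part; everything else is a routine verification of hypotheses.
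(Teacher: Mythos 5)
Your proposal is correct and follows essentially the same route as the paper: the easy deduction of $a\le n-2$ from $F_t^{(k)}\le 2^{t-2}$, the two applications of Matveev's theorem to \eqref{deslambda1} and \eqref{deslambda2} with the same choices of $D$, $B$, and the $A_i$, the same nonvanishing arguments, the substitution of \eqref{aplic1matveev} into the second bound, and the same elementary lemma to resolve the implicit inequality \eqref{deslog1}. No substantive differences to report.
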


\section{Considerations on $k$ for $a<n-2$}

\noindent In this section, we show that for any $k\ge3$, equation
\eqref{eqn1} has no solutions in the range indicated in the title except the one given in Theorem \ref{teo1}$(b)$. A key point for the case $a\neq n-2$ consists of exploiting the
fact that when $k$ is large, the dominant root of $F^{(k)}$ is
exponentially close to 2, so one can write the dominant term of
the Binet formula for $F^{(k)}$ as a power of 2 plus an error
which is well under control. Precisely we will use the
following Lemma from \cite{BL2} (see also \cite{BL1}).

\begin{lema}\label{estimaciondeltaeta2}
For $k\geq 2$, let $\alpha$ be the dominant
root of $F^{(k)}$, and consider the function $f_k(x)$ defined in \eqref{def-f_k}. If $r>1$ is an integer satisfying $r-1<2^{k/2}$,
then
\[
f_k(\alpha)\alpha^{r-1}=2^{r-2}+\frac{\delta}{2}+2^{r-1}\eta+\eta\delta,
\]
where $\delta$ and $\eta$ are real numbers such that
\[
|\delta|<\frac{2^r}{2^{k/2}}\quad  \text{and} \quad
|\eta|<\frac{2k}{2^{k}}.
\]
\end{lema}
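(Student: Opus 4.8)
The plan is to observe that the right-hand side of the claimed identity factors neatly. Setting $\eta := f_k(\alpha) - 1/2$ and $\delta := \alpha^{r-1} - 2^{r-1}$, one has
\[
\left(\tfrac{1}{2} + \eta\right)\left(2^{r-1} + \delta\right) = 2^{r-2} + \frac{\delta}{2} + 2^{r-1}\eta + \eta\delta,
\]
so the left-hand side equals exactly $f_k(\alpha)\,\alpha^{r-1}$ by the very definitions of $\eta$ and $\delta$. Thus the identity holds \emph{for free}, and the entire content of the lemma is the two numerical estimates $|\delta| < 2^r/2^{k/2}$ and $|\eta| < 2k/2^k$. I would carry these out separately, using throughout that $\alpha$ lies between $2(1-2^{-k})$ and $2$, i.e.\ writing $\beta := 2 - \alpha$ with $0 < \beta < 2^{1-k}$.

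For $\delta$ I would use the difference-of-powers factorization
\[
2^{r-1} - \alpha^{r-1} = (2 - \alpha)\sum_{j=0}^{r-2} 2^{r-2-j}\alpha^{j}.
\]
Since $0 < \alpha < 2$, each summand is bounded by $2^{r-2}$, so the sum is at most $(r-1)2^{r-2}$; combined with $2 - \alpha = \beta < 2^{1-k}$ this gives $|\delta| < (r-1)\,2^{r-1-k}$. Here the hypothesis $r - 1 < 2^{k/2}$ does exactly the work needed: it replaces the factor $r-1$ by $2^{k/2}$, yielding $|\delta| < 2^{r-1-k/2} = \tfrac12\,\bigl(2^r/2^{k/2}\bigr) < 2^r/2^{k/2}$, as required.

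For $\eta$ I would exploit that $f_k(2) = 1/2$ and compute the difference explicitly. Writing $f_k(\alpha) = (1-\beta)/(2 - (k+1)\beta)$, a direct calculation gives
\[
\eta = f_k(\alpha) - \frac12 = \frac{(k-1)\beta}{2\bigl(2 - (k+1)\beta\bigr)}.
\]
The remaining point is to bound the denominator away from $0$ uniformly in $k$: since $\beta < 2^{1-k}$ one checks $(k+1)\beta < (k+1)2^{1-k} \le 3/2$ for all $k \ge 2$, whence $2 - (k+1)\beta > 1/2$. Therefore $|\eta| < (k-1)\beta < (k-1)2^{1-k} = 2(k-1)/2^k < 2k/2^k$, which is the claimed bound.

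The main subtlety---such as it is---is not the factorization (essentially bookkeeping once spotted) but the two places where the constants are tight. For $\delta$ the role of the hypothesis $r-1 < 2^{k/2}$ must be identified correctly; for $\eta$ the generous constant $2k/2^k$ (rather than the $(k-1)/2^k$ one might naively expect) is forced by the smallest values of $k$, where the denominator $2 - (k+1)\beta$ is only guaranteed to exceed $1/2$ rather than $1$. I would verify the edge case $k = 2$ (where $\alpha = (1+\sqrt{5})/2$) by hand to confirm that the uniform denominator bound, and hence both estimates, indeed hold there as well.
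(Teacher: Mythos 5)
Your proof is correct. Note that the paper itself does not prove this lemma but imports it from \cite{BL2}; your decomposition $\eta=f_k(\alpha)-1/2$, $\delta=\alpha^{r-1}-2^{r-1}$ is exactly the one implicit in the shape of the stated identity (it is why the cross term $\eta\delta$ appears), and your two estimates --- the telescoping bound $|\delta|<(r-1)2^{r-1-k}$ combined with $r-1<2^{k/2}$, and the explicit formula $\eta=(k-1)\beta/\bigl(2(2-(k+1)\beta)\bigr)$ with the uniform denominator bound $2-(k+1)\beta>1/2$ --- are the standard argument and check out, including at $k=2$.
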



\subsection{The case of small $k$} \label{small k-a<n-2}

\noindent We next treat the cases when $k\in[3,340]$. Note that
for these values of the parameter $k$, Lemma \ref{cota_na} gives
us absolute upper bounds for $n$ and $m$. However, these upper
bounds are so large that we wish to reduce them to a range where
the solutions can be identified by using a computer. To do this,
we first let
\begin{equation} \label{eq:z1}
z_1:=a\log 2-(n-1)\log\alpha-\log f_k(\alpha).
\end{equation}
First of all, note that \eqref{deslambda1} can be rewritten as
\begin{equation}  \label{deslambda1(z_1)}
|e^{z_1}-1|<\frac{3}{\alpha^{n-m}}.
\end{equation}
Secondly, by using \eqref{eqn1} and \eqref{error}, we have
\[
f_k(\alpha)\alpha^{n-1}<F_n^{(k)}+\frac{1}{2}<F_n^{(k)}+F_m^{(k)}=2^a.
\]
Consequently, $1<2^a\alpha^{-(n-1)}(f_k(\alpha))^{-1}$ and so $z_1>0$. This, together with \eqref{deslambda1(z_1)}, gives
\[
0<z_1\leq e^{z_1}-1<\frac{3}{\alpha^{n-m}}.
\]
Replacing $z_1$ in the above inequality by its formula \eqref{eq:z1} and dividing both sides of the resulting inequality by $\log\alpha$, we get
\begin{equation} \label{eq:small1}
0<a\left(\frac{\log 2}{\log\alpha}\right)-n+\left(1-\frac{\log
f_k(\alpha)}{\log\alpha}\right)<6\cdot\alpha^{-(n-m)},
\end{equation}
where we have used again the fact that $1/\log\alpha<2$. We put
\[
\hat{\gamma}:=\hat{\gamma}(k)=\frac{\log 2}{\log\alpha},\quad
\hat{\mu}:=\hat{\mu}(k)=1-\frac{\log f_k(\alpha)}{\log\alpha},
\quad A:=6\quad {\text{\rm and}}\quad B:=\alpha.
\]
We also put $M_k:=\left\lfloor 6.66\times 10^{27} k^7 \log^5
k\right\rfloor$, which is an upper bound on $a$ by Lemma
\ref{cota_na}. The fact that $\alpha$ is a unit in ${\mathcal
O}_{\mathbb K}$, the ring of integers of $\mathbb{K}$, ensures
that $\hat{\gamma}$ is an irrational number. Even more,
$\hat{\gamma}$ is transcendental by the Gelfond-Schneider Theorem. Then, the above inequality \eqref{eq:small1} yields
\begin{equation} \label{z1>0}
0<a\hat{\gamma}-n+\hat{\mu}<A\cdot B^{-(n-m)}.
\end{equation}
It then follows from Lemma \ref{reduce}, applied to inequality \eqref{z1>0}, that
\[
n-m <\frac{\log(Aq/\epsilon)}{\log B},
\]
where $q=q(k)>6M_k$ is a denominator of a convergent of the continued fraction of $\hat{\gamma}$ such that $\epsilon=\epsilon(k)=||\hat{\mu}q||-M_k||\hat{\gamma} q||>0$. A computer search with \emph{Mathematica} revealed that if $k\in[3,340]$, then the maximum value of $\log(Aq/\epsilon)/\log B$ is $<$ 680. Hence, we deduce that the possible solutions $(n,m,k,a)$ of the equation \eqref{eqn1} for which $k$ is in the range $[3,340]$ all have $n-m\in[1,680]$.

Let us now work a little bit on \eqref{deslambda2} in order to find an upper bound on $n$. Let
\begin{equation} \label{eq:z2}
z_2:=a \log 2-(n-1)\log\alpha-\log \mu(k,n-m),
\end{equation}
where $\mu(k,n-m):=f_k(\alpha)(1+\alpha^{m-n})$. Therefore, \eqref{deslambda2} can be rewritten as
\begin{equation} \label{lambda2z2}
|e^{z_2}-1|<\frac{2}{\alpha^{n-1}}.
\end{equation}
Note that $z_2\neq 0$; thus, we distinguish the following cases. If $z_2>0$, then $e^{z_2}-1>0$, so from \eqref{lambda2z2} we obtain
\[
0<z_2<\frac{2}{\alpha^{n-1}}.
\]
Suppose now that $z_2<0$. It is a straightforward exercise to
check that $2/\alpha^{n-1}\leq 1/2$ for all $k\geq 3$ and all
$n\geq 5$. Then, from \eqref{lambda2z2}, we have that
$|e^{z_2}-1|<1/2$ and therefore $e^{|z_2|}<2$. Since $z_2<0$, we
have
\[
0<|z_2|\leq e^{|z_2|}-1=e^{|z_2|}|e^{z_2}-1|<\frac{4}{\alpha^{n-1}}.
\]
In any case, we have that the inequality
\[
0<|z_2|<\frac{4}{\alpha^{n-1}}
\]
holds for all $k\geq 3$ and $n\geq 5$. Replacing $z_2$ in the above inequality by its formula \eqref{eq:z2} and arguing as in \eqref{eq:small1}, we conclude that
\begin{equation} \label{smallk-z2}
0<\left|a\left(\frac{\log
2}{\log\alpha}\right)-n+\left(1-\frac{\log\mu(k,n-m)}{\log\alpha}\right)\right|<4\cdot\alpha^{-(n-1)}.
\end{equation}
Here, we also take $M_k:=\left\lfloor 6.66\times 10^{27} k^7
\log^5 k\right\rfloor$ (upper bound on $a$), and, as we explained
before, we apply Lemma \ref{reduce} to inequality
\eqref{smallk-z2}  in order to obtain an upper bound on $n-1$.
Indeed, with the help of \emph{Mathematica} we find that if
$k\in[3,340]$ and $n-m\in[1,680]$, then the maximum value of
$\log(4q/\epsilon)/\log \alpha$ is $<$ 680. Thus, the possible
solutions $(n,m,k,a)$ of the equation \eqref{eqn1} with $k$ in the
range $[3,340]$ all have $n\leq 680$.

Finally, a brute force search with \emph{Mathematica} in the range
\[
3\leq k \leq 340, \quad k+2\leq n \leq 680 \quad \text{and} \quad
2\leq m \leq n-1
\]
gives no solutions for the equation \eqref{eqn1} with $a<n-2$. This completes the analysis in the case $k\in[3,340]$.


\subsection{The case of large $k$}

\noindent Here we assume that $k>340$ and show that \eqref{eqn1}
has no solutions. For such $k$ we have
\[
m<n< 6.66\times 10^{27} k^7 \log^5k<2^{k/2}.
\]
It then follows from Lemma \ref{estimaciondeltaeta2} that
\[
|f_k(\alpha)\alpha^{n-1}-2^{n-2}|<\frac{2^{n-1}}{2^{k/2}}+\frac{2^{n}k}{2^{k}}+\frac{2^{n+1}k}{2^{3k/2}}.
\]
The above inequality obviously holds with $n$ replaced by $m$. This, together with \eqref{expresion<2}, implies
\begin{align*}
\left|2^{n-2}+2^{m-2}-2^a\right|&\le\left|2^{n-2}-f_k(\alpha)\alpha^{n-1}\right|+\left|2^{m-2}-f_k(\alpha)\alpha^{m-1}\right|\notag\\
&+\left|f_k(\alpha)\alpha^{n-1}+f_k(\alpha)\alpha^{m-1}-2^a\right|\notag\\
&<\frac{2^{n-1}+2^{m-1}}{2^{k/2}}+\frac{(2^{n}+2^{m})k}{2^k}+\frac{(2^{n+1}+2^{m+1})k}{2^{3k/2}}+1.
\end{align*}
Dividing both sides of the above inequality by $2^{n-2}$ and taking into account that $n\geq k+2$, we get
\begin{align}
\left|1+2^{m-n}-2^{a-(n-2)}\right|&<\frac{2+2^{m-n+1}}{2^{k/2}}+
\frac{(4+2^{m-n+2})k}{2^k}+\frac{(8+2^{m-n+3})k}{2^{3k/2}}+\frac{1}{2^{n-2}} \notag \\
&<\frac{3}{2^{k/2}}+\frac{6k}{2^k}+\frac{12k}{2^{3k/2}}+\frac{1}{2^{n-2}} \notag\\
&<\frac{6}{2^{k/2}}. \label{util2}
\end{align}
However, the above inequality is not possible when $a<n-2$, since the term of the left hand side is $> 1/2$ because $1+2^{m-n}>1$ and
$2^{a-(n-2)}\le1/2$, in contrast to the right hand side which is very small because $k> 340$.


\section{Considerations on $k$ for $a=n-2$}

\noindent To begin, we note that for any $k\ge3$, the triple
$(n,m,a)=(k+2^{\ell},2^{\ell}+\ell-1,k+2^{\ell}-2)$, where $\ell$
is a positive integer such that $2^{\ell}+\ell-2\leq k$, is a
solution of the Diophantine equation \eqref{eqn1}. Indeed, since
$2^{\ell}+\ell-2\leq k$, we have
\[
2\leq 2^{\ell}+\ell-1\leq k+1 ~~~~ \text{and} ~~~~ k+2\leq k+2^{\ell}\leq 2k+2.
\]
Thus, from \eqref{primeros-k} and \eqref{segundos-k} we get,
respectively
\[
F_{2^{\ell}+\ell-1}^{(k)}=2^{2^{\ell}+\ell-3}\quad \text{and}\quad
F_{k+2^{\ell}}^{(k)}=2^{k+2^{\ell}-2}-2^{2^{\ell}+\ell-3}.
\]
Now it is clear that $F_{k+2^{\ell}}^{(k)}+F_{2^{\ell}+\ell-1}^{(k)}= 2^{k+2^{\ell}-2}$.

We remark that the estimate of Bravo and Luca presented in Lemma \ref{estimaciondeltaeta2} is sufficient for several Diophantine
problems involving $k-$Fibonacci numbers, but for the
case $a=n-2$, that will be discussed below, we require some better
ones. To this end, we recall the following result due to Cooper and Howard \cite{CoHo}.

\begin{lema}\label{teoHoward}
For $k\geq 2$ and $n\geq k+2$,
\[
F_n^{(k)}=2^{n-2}+\sum_{j=1}^{\lfloor \frac{n+k}{k+1} \rfloor-1}C_{n,j} \,2^{n-(k+1)j-2},
\]
where
\[
C_{n,j}=(-1)^j \left[ \binom{n-jk}{j}-\binom{n-jk-2}{j-2}  \right].
\]
\end{lema}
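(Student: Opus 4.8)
The plan is to prove the formula by passing to generating functions, since the three--term recursion \eqref{three-recursion} has constant coefficients and very short support. First I would set $G(x):=\sum_{n\ge 1}F_n^{(k)}x^n$, with the convention $F_n^{(k)}=0$ for $n\le 0$, and exploit that \eqref{three-recursion}, rewritten as $F_n^{(k)}-2F_{n-1}^{(k)}+F_{n-k-1}^{(k)}=0$ for $n\ge 3$, is exactly the statement that $(1-2x+x^{k+1})G(x)$ is a polynomial. A direct inspection of the coefficients of $x^0,\ldots,x^{k+1}$, using the initial data \eqref{primeros-k} together with the fact that $F_n^{(k)}-2F_{n-1}^{(k)}=0$ for $3\le n\le k+1$ (and $F_{k+1}^{(k)}-2F_k^{(k)}+F_0^{(k)}=0$), shows that only the coefficients of $x$ and $x^2$ survive, giving
\[
G(x)=\frac{x-x^2}{1-2x+x^{k+1}}.
\]

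Next I would expand the denominator as a geometric series, $\frac{1}{1-2x+x^{k+1}}=\sum_{i\ge 0}(2x-x^{k+1})^i$, and apply the binomial theorem to each summand. Collecting the coefficient of $x^N$ produces
\[
a_N:=[x^N]\frac{1}{1-2x+x^{k+1}}=\sum_{j\ge 0}(-1)^j\binom{N-kj}{j}2^{N-(k+1)j},
\]
the sum being finite because $\binom{N-kj}{j}=0$ once $j>N/(k+1)$. Since $G(x)=(x-x^2)\sum_N a_N x^N$, reading off the coefficient of $x^n$ yields the clean identity $F_n^{(k)}=a_{n-1}-a_{n-2}$, valid for every $n\ge 1$.

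It then remains to rewrite $a_{n-1}-a_{n-2}$ in the stated form. Pulling out the common factor $2^{n-(k+1)j-2}$ from the two series, the coefficient attached to each $j$ becomes $(-1)^j\bigl(2\binom{n-1-kj}{j}-\binom{n-2-kj}{j}\bigr)$. The $j=0$ contribution is $2\cdot 2^{n-2}-2^{n-2}=2^{n-2}$, which is the leading term; here the hypothesis $n\ge k+2$ guarantees $n\ge 2$, so that both series indeed carry a $j=0$ term. For $j\ge 1$, writing $M:=n-kj$, I would verify the purely combinatorial identity
\[
2\binom{M-1}{j}-\binom{M-2}{j}=\binom{M}{j}-\binom{M-2}{j-2}
\]
by two applications of Pascal's rule, which identifies the $j$--th coefficient with $C_{n,j}$. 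Finally I would match the ranges of summation: the surviving terms are exactly those with $1\le j\le\lfloor (n-1)/(k+1)\rfloor$, and the elementary relation $\lfloor (n+k)/(k+1)\rfloor-1=\lfloor (n-1)/(k+1)\rfloor$ turns this into the upper limit stated in the lemma.

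The part demanding the most care will be the bookkeeping at the boundary: one must use the convention $\binom{a}{b}=0$ for $b<0$ or $b>a$ consistently, so that the two geometric expansions can be combined term by term and so that no spurious terms appear near $j=\lfloor (n-1)/(k+1)\rfloor$. I would check this boundary case explicitly (for instance when $j(k+1)=n-1$, where $M=j+1$ and both sides reduce to $2$). An alternative route avoiding generating functions is strong induction on $n$ directly from \eqref{three-recursion}, but that forces one to reconcile the three different summation limits of $F_n^{(k)}$, $F_{n-1}^{(k)}$ and $F_{n-k-1}^{(k)}$ at once, so the generating--function computation above keeps the bookkeeping more transparent.
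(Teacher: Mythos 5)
The paper does not prove this lemma at all: it is quoted verbatim from Cooper and Howard \cite{CoHo}, so there is no internal argument to compare against. Your generating-function derivation is a correct, self-contained proof. The key identity $G(x)=(x-x^2)/(1-2x+x^{k+1})$ follows exactly as you say from \eqref{three-recursion} together with \eqref{primeros-k} (the only point to note is that the three-term relation holds for $3\le n\le k+1$ as well, with the convention $F_j^{(k)}=0$ for $j\le 0$, which you use implicitly and correctly). The extraction $a_N=\sum_{j\ge0}(-1)^j\binom{N-kj}{j}2^{N-(k+1)j}$ is right, the identity
\[
2\binom{M-1}{j}-\binom{M-2}{j}=\binom{M}{j}-\binom{M-2}{j-2}
\]
does follow from two applications of Pascal's rule (both sides equal $\binom{M-1}{j}+\binom{M-2}{j-1}$), and your matching of the summation range via $\lfloor(n+k)/(k+1)\rfloor-1=\lfloor(n-1)/(k+1)\rfloor$ is correct; the boundary cases you single out (e.g.\ $j(k+1)=n-1$, and the excluded term $j(k+1)=n$ whose coefficient vanishes identically) are exactly the ones that need checking, and they check out under the paper's convention for $\binom{a}{b}$. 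What your approach buys, relative to the paper, is an actual proof in place of a citation; it also yields the slightly stronger statement $F_n^{(k)}=a_{n-1}-a_{n-2}$ for all $n\ge1$, of which the lemma (and the identity \eqref{segundos-k}) are specializations.
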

In the above, we have denoted by $\lfloor x \rfloor$ the greatest integer less than or equal to $x$ and used the convention that $\binom{a}{b}=0$ if either $a<b$ or if one of $a$ or $b$ is negative. For example, assuming that $k+2\le n\le 2k+2$ we get $\lfloor (n+k)/(k+1)\rfloor=2$ and $C_{n,1}=-(n-k)$, so Cooper and Howard's formula becomes the identity \eqref{segundos-k}. 


\subsection{The case of small $k$}

Suppose that $k\in[3,690]$. Here, we perform an analysis quite similar to that made in Subsection \ref{small k-a<n-2} to reduce the upper bound on $n$. After doing the respective calculations, we conclude that the possible solutions $(n,m,k,n-2)$ of the equation \eqref{eqn1} with
$k$ in the range $[3,690]$ all have $n\leq 1380$. The procedure is quite similar; hence we omit the details in order to avoid unnecessary repetitions. Finally, a brute force search with \emph{Mathematica} in the range
\[
3\leq k \leq 690, \quad k+2\leq n \leq 1380 \quad \text{and} \quad 2\leq m \leq n-1
\]
confirms the assertion of Theorem \ref{teo1}$(c)$.


\subsection{The case of large $k$} Let us now assume that $k>690$. Note that for these values of $k$ we have
\[
m<n< 6.66\times 10^{27} k^7 \log^5k<2^{k/4}.
\]
We now proceed with the proof of Theorem \ref{teo1}$(c)$ by distinguishing two cases on $n$.

\medskip


\noindent {\bf \underline{Case 1}.} $n \leq 2k+2$. Suppose first that $m\le k+1$. Then, it follows from \eqref{primeros-k} and \eqref{segundos-k}, that
\[
F_n^{(k)}=2^{n-2}-(n-k)\cdot 2^{n-k-3}  \quad \text{and}\quad F_m^{(k)}=2^{m-2}.
\]
Then, from the original equation \eqref{eqn1} we have $2^{m-2}=(n-k)\cdot 2^{n-k-3}$ or, equivalently, $n-k=2^{\ell}$ where $\ell=m-n+k+1$. So, $m=(n-k)+\ell-1=2^{\ell}+\ell-1$. Further, since $m\leq k+1$, we deduce that $2^{\ell}+\ell-2\leq k$. That is, the solution $(n,m,a)$ of the equation \eqref{eqn1} has the shape \eqref{sol(a=n-2)}.

Now suppose $m>k+1$. Note that in this case we have that  $1\leq n-m\leq k$ and $2\leq m-k\leq k+1$ as well as $3\leq n-k\leq k+2$. Here, equation \eqref{eqn1} implies
\[
2^{m-2}=(n-k)\cdot 2^{n-k-3} +(m-k)\cdot 2^{m-k-3}
\]
giving
\[
2^{k+1}=(n-k)\cdot 2^{n-m} +m-k.
\]
Thus, $2^{n-m}\mid m-k$, and consequently, $2^{k+1}\leq (k+2)(k+1)+k+1=(k+1)(k+3)$. This contradicts our assumption that $k>690$.

\medskip


\noindent {\bf \underline{Case 2}.} $n > 2k+2$. To deal with this case, we first remark that a straightforward application of Lemma \ref{teoHoward} allows us to conclude that for all $n\geq k+2$, 
\begin{equation} \label{Fn^k-2term}
F_{n}^{(k)} = 2^{n-2}\left(1-\dfrac{n-k}{2^{k+1}}+s_1\right)~~\text{where}~~|s_1|<\dfrac{4n^2}{2^{2k+2}}.
\end{equation}
Indeed,
\begin{align*}
|s_1| & \le  \sum_{j=2}^{\lfloor \frac{n+k}{k+1} \rfloor-1}\frac{|C_{n,j}|}{2^{(k+1)j}} < \sum_{j\ge2} \frac{2n^j}{2^{(k+1)j}(j-2)!}\\
& < \frac{2n^2}{2^{2k+2}} \sum_{j\ge2} \frac{(n/2^{k+1})^{j-2}}{(j-2)!}  <  \frac{2n^2}{2^{2k+2}}e^{n/2^{k+1}}.
\end{align*}
Further, since $n<2^{k}$ we have that $e^{n/2^{k+1}}<e^{1/2}<2$. Thus
\[
|s_1|<\frac{4n^2}{2^{2k+2}}.
\]
Suppose now that $m\leq k+1$. In this case we use \eqref{eqn1} and \eqref{Fn^k-2term} as well as the fact that $F_m^{(k)}=2^{m-2}$, to obtain
\[
\left|\frac{2^{n-2}(n-k)}{2^{k+1}}-2^{m-2}\right|<2^{n-2}\left|s_1\right|<\dfrac{2^{n}n^2}{2^{2k+2}}.
\]
Dividing the above inequality by $2^{n-2-(k+1)}(n-k)$ and taking into account that $n<2^{k/4}$, we find that
\begin{equation} \label{util-final1}
\left|1-\frac{2^{m-n+k+1}}{n-k}\right|<\frac{1}{2^{k/2}}.
\end{equation}
Note that if the left--hand side of \eqref{util-final1} is not zero, then we deduce $1/n<1/(n-k)<1/2^{k/2}$, which is false since $n<2^{k/4}$ and $k>690$. Hence, $n=k+2^{\ell}$  where $\ell=m-n+k+1$. However, this is impossible since $n-k\in\mathbb{Z}$ and $m-n+k+1\leq -1$.

Finally, suppose that $m\geq k+2$. Note that if one takes $a=n-2$ in \eqref{util2}, then it is clear that $2^{m-n}<6/2^{k/2}<1/2^{k/2-3}$, and so $n-m>k/2-3$. Going back to equality \eqref{eqn1} and substituting $F_n^{(k)}$ and $F_m^{(k)}$, according to the identity \eqref{Fn^k-2term}, we find
\begin{align*}
\left|\frac{2^{n-2}(n-k)}{2^{k+1}}-2^{m-2}\right| & <  2^{n-2}|s_1|+2^{m-2}|s_2|+\frac{2^{m-2}(m-k)}{2^{k+1}}\\
& <  \frac{2^{n+1}n^2}{2^{2k+2}}+\frac{2^{m-2}(m-k)}{2^{k+1}}.
\end{align*}
From the above, and using the facts $n>2k+2$, $n<2^{k/4}$ and $n-m>k/2-3$, we get, after some calculations, that
\begin{equation}\label{util-final2}
\left|1-\frac{2^{m-n+k+1}}{n-k}\right|<\frac{10}{2^{k/2}}.
\end{equation}
Note that the left--hand side of \eqref{util-final2} is zero, since otherwise the same argument used in \eqref{util-final1} leads to
a contradiction. Hence
\begin{equation}\label{equal1-n>2k+2}
\dfrac{2^{n-2}(n-k)}{2^{k+1}}=2^{m-2}.
\end{equation}
In order to exploit the above relation, we shall consider one more term for $F_n^{(k)}$ in the expression \eqref{Fn^k-2term}. Indeed, the same argument that we used to obtain \eqref{Fn^k-2term} allows us to deduce that 
\[
F_{n}^{(k)} =  2^{n-2}\left(1-\dfrac{n-k}{2^{k+1}}+\dfrac{(n-2k-1)(n-2k)-2}{2^{2k+3}}+s_3\right)~~\text{where}~~|s_3|<\frac{4n^3}{2^{3k+3}}.
\]
Combining the above identity for $F_{n}^{(k)}$ and the identity \eqref{Fn^k-2term} applied to $F_{m}^{(k)}$ together with \eqref{eqn1} and the relation \eqref{equal1-n>2k+2}, we conclude that
\[
\left|\frac{2^{n-2}\left((n-2k-1)(n-2k)-2\right)}{2^{2k+3}} - \frac{2^{m-2}(m-k)}{2^{k+1}}\right| < 2^{n-2}|s_3|+2^{m-2}|s_2|<\frac{2^{n+1} n^3}{2^{3k+3}}.
\]
Dividing both sides of the above inequality by $2^{n-2-(2k+3)}$ and using \eqref{equal1-n>2k+2} once again, we get the inequality
\[
|(n-2k-1)(n-2k)-2-2(n-k)(m-k)|<\frac{8n^3}{2^k}<\frac{8}{2^{k/4}}<1,
\] 
and consequently
\begin{equation} \label{equal2-n>2k+2}
(n-2k-1)(n-2k)-2=2(n-k)(m-k).
\end{equation}
On the other hand, by equality \eqref{equal1-n>2k+2} once more we have that $2^{m-n+k+1}=n-k$. From this, we get that $2^{n-m}= 2^{k+1}/(n-k)<2^k$, so $n-m<k$ or equivalently $m-k>n-2k$. Using this fact on equality \eqref{equal2-n>2k+2}, we obtain
\[
(n-2k-1)(n-2k)-2>2(n-k)(n-2k)
\]
implying $(n-2k)(n+1)<-2$, which is impossible because our assumption that $n>2k+2$. This completes the analysis when $n>2k+2$ and $m\geq k+2$ and therefore the proof of Theorem \ref{teo1}.

\medskip

\noindent \textbf{Acknowledgements}. J. J. B. was partially supported by Universidad del Cauca and C. A. G. thanks to the Universidad del Valle for support during his Ph.D. studies. The work of F. L. was supported in part by Projects PAPIIT IN 104512, CONACyT 163787 and a Marcos Moshinsky Fellowship.

\end{document}